\documentclass{article}

% packages:
\usepackage[utf8]{inputenc}
\usepackage{graphicx}
\usepackage[all]{xy}
\usepackage{amssymb}
\usepackage{amsmath}
\usepackage{amsthm}
\usepackage{verbatim}
\usepackage{latexsym}
\usepackage{mathrsfs}

\usepackage{mathrsfs}

\newenvironment{myItemize}{
  \begin{list}{\raisebox{2.2pt}{$\centerdot$}}{%
      \setlength\leftmargin{18pt}
      \setlength\labelwidth{20pt}
    }
  }{
  \end{list}
}

\makeindex

 % open interval
\newcommand{\sval}[1]{\left[ #1 \right]} % closed interval
 % half open to the right
 % half open to the left

% Union and intersection:
\renewcommand{\Cup}{\bigcup}
\renewcommand{\Cap}{\bigcap}

% Greeks:

\newcommand{\g}{\gamma}
\newcommand{\e}{\varepsilon}

\renewcommand{\o}{\omega}

 % elementary submodel

% empty set:
\newcommand{\es}{\varnothing}

% other notation:

\renewcommand{\H}{\mathcal{H}}

\newcommand{\N}{\mathbb{N}} %natural numbers
 %integers
\newcommand{\Q}{\mathbb{Q}} %rationals
\newcommand{\R}{\mathbb{R}} %reals
%\newcommand{\C}{\mathbb{C}}

  %catenate

             %forcing notion

             %power set

 %Successor
 %Skolem-closure
 %cardinality
 %support
 %regular

\newcommand{\Tr}{\operatorname{Tr}}

\newcommand{\rest}{\!\restriction\!}
\newcommand{\restl}{\restriction}  % restriction for lower case
\newcommand{\dom}{\operatorname{dom}}
\newcommand{\ran}{\operatorname{ran}}

  %type
  %strong type
 
\renewcommand{\le}{\leqslant}

             % symmentric difference

             % symmentric difference
             % symmentric difference
             % symmentric difference

\newcommand{\Sii}{{\Sigma_1^1}}
\newcommand{\Pii}{{\Pi_1^1}}

 % Ehrenfeucht-Mostowski
 % Ehrenfeucht-Fraïssé

% Big logic connectives:

% The NOT:

% Theorem environments
\swapnumbers
\newtheorem*{Thm*}{Theorem}
\newtheorem{Thm}{Theorem}

\newtheorem{Prop}[Thm]{Proposition}
\newtheorem{Fact}[Thm]{Fact}

\theoremstyle{definition}
\newtheorem{Def}[Thm]{Definition}

\newtheorem{claim}{Claim}[Thm]
\newtheorem{subclaim}{Subclaim}[claim]

\theoremstyle{remark}

\newcommand{\proofvpara}{\text{}}

\newenvironment{proofVOf}[1] {\noindent \textit{Proof of #1.}\ignorespaces\renewcommand{\proofvpara}{\text{#1}}}
{\nopagebreak\hspace*{\fill}\mbox{$\square_{\,\proofvpara}$}\\}%_{\text{#1}}$}

%_{\text{#1}}$}

%_{\text{#1}}$}

% \begin{Author}
%   \FirstName{Vadim}\LastName{Kulikov}
%   \PostalAddress{Kurt Gödel Research Center, Währinger Straße 25, 1090, Wien, Austria}
%   \Email{vadim.kulikov@iki.fi}
% \end{Author}

% \begin{MathReviews} 
%   \primary{03E15} %descriptive set theory
%   \primary{28A05} %Classes of sets (Borel fields, $\sigma$-rings, etc.), measurable sets, Suslin sets, analytic sets 
%   \secondary{28E15} %Other connections with logic and set theory}
% \end{MathReviews}

% \begin{KeyWords} 
%   \keyword{purely unrectifiable}
%   \keyword{coanalytic complete}
%   \keyword{Hilbert space}
% \end{KeyWords}
\author{Vadim Kulikov \\ Kurt Gödel Research Center, Vienna, Austria}
\title{The Class of Purely Unrectifiable Sets in $\ell_2$ is $\Pii$-complete}

\date{\today}

%\addtolength{\oddsidemargin}{-20pt}
%\addtolength{\textwidth}{20pt}
%\addtolength{\evensidemargin}{-0pt}
%\addtolength{\textwidth}{0pt}
%\addtolength{\topmargin}{-30pt}
%\addtolength{\headsep}{20pt}

\allowdisplaybreaks

\begin{document}

\maketitle

\begin{abstract}
  The space $F(\ell_2)$ of all closed subsets of $\ell_2$ is a Polish space.
  We show that the subset $P\subset F(\ell_2)$ consisting of the purely
  $1$-unrectifiable sets is $\Pii$-complete.
\end{abstract}

\section{Introduction}

The concepts of unrectifiable and purely unrectifiable sets are central in contemporary
geometric measure theory, see e.g. \cite{mattila1999geometry}. '
In some sense these are sets which are not capturable by smooth approximations:
a set is \emph{unrectifiable}, if it cannot be covered (upto a negligible set) by countably many
$C^1$-curves and \emph{$1$-purely unrectifiable}, if its $1$-dimensional Hausdorff measure 
restricted to any $C^1$-curve is zero. 
We only consider $1$-purely unrectifiable 
sets in this article (as opposed to $m$-purely unrectifible for $m>1$),
so we skip the ``$1$'' from the notation. There are several open question concerning 
(partial) characterisations of purely unrectifiable sets such as for example whether
or not the two-dimensional Brownian motion is purely unrectifiably with 
probability~$1$~\cite{PreissVerbal}.

Here we show that the notion of pure unrectifiability is subtle to the extend that
any decision procedure for deciding whether a given closed subset of $\ell_2$ 
is purely unrectifiable or not requires an exhaustive search through continuum many 
cases, that is to say, in the language of
descriptive set theory, the set of all closed purely unrectifiable subsets of $\ell_2$ 
is $\Pii$-hard. On the other hand there \emph{is} a decision procedure of this sort,
so the set is $\Pii$-complete (or \emph{coanalytic complete}).

\paragraph*{Acknowledgment}
I would like to thank David Preiss for introducing me to the concept of unrectifiable sets
and pointing to this research direction.

\section{Basic Definitions}

In order to define purely unrectifiable sets in $\ell_2$, let us review the
definition of $C^1$-curve in~$\ell_2$:

\begin{Def}
  A \emph{Fr\'echet derivative} of a function $f\colon\sval{0,1}\to\ell_2$ 
  at point $x\in\sval{0,1}$ is a linear operator $A_x\colon \R\to\ell_2$
  such that
  $$\lim_{h\to 0}\frac{\|f(x+h)-f(x)-A_xh\|_2}{|h|}=0.$$
  The function belongs to $C^1$, if the Fr\'echet derivative exists at every point
  and the map $x\mapsto A_x$ is continuous
  in the operator norm.   

  The linear operator $A_x$ is uniquely determined by the vector
  $A_x(1)$, so denote $f'(x)=A_x(1)$. Also denote the space of all
  $C^1$-curves by $C^1(\sval{0,1},\ell_2)$.
\end{Def}

\begin{Def}
  A subset $N$ of $\ell_2$ is \emph{purely unrectifiable}, if it is null on every $C^1$-curve. 
  That is, given a $C^1$-map $f\colon \sval{0,1}\to\ell_2$,
  the one-dimensional Hausdorff measure of $N\cap\ran(f)$, denoted 
  $\H^1(N\cap\ran(f))$, equals~$0$.
  Denote the set of purely unrectifiable curves in $\ell_2$ by $P$.
\end{Def}

\section{Preliminaries in Descriptive Set Theory}\label{sec:PrelDST}

We follow the notation and presentation of
the book ``Classical Descriptive Set Theory'' by A. Kechris~\cite{Kech} and 
refer frequently to it below when addressing well-known facts.

A \emph{Polish space} is a separable topological space which is homeomorphic to a complete
metric space. The Hilbert space $\ell_2$ is an example of a Polish space.
A \emph{standard Borel space} is a set $X$ endowed with a $\sigma$-algebra
$S$ such that there exists a Polish topology on $X$ in which the Borel sets are precisely
the sets in~$S$. 

Let $F(\ell_2)$ denote the set of all closed subsets of $\ell_2$.
This is a standard Borel space where the
$\sigma$-algebra is generated by the sets of the form
$$\{A\in F(\ell_2)\mid A\cap U\ne\es\}\eqno(B)$$
where $U$ ranges over the basic open sets of $\ell_2$~\cite[Thm 12.6]{Kech}.
We need the following fact.
Let $H$ be the Hilbert cube $H=\sval{0,1}^{\N}$. By \cite[Thm 4.14]{Kech},
$\ell_2$ can be embedded into $H$ so that the image is a $G_\delta$ subset. Let $e$
be that embedding. Let $K(H)$ be the set of all
compact non-empty subsets of $H$ equipped with the Hausdorff metric; $K(H)$ is a compact Polish space.

\begin{Fact}\label{fact:HilbertCube}
  The embedding $e\colon\ell_2\to H$ induces an embedding of $F(\ell_2)$ into $K(H)$
  such that the image of $F(\ell_2)$ is $G_\delta$ in $K(H)$ thus inducing a Polish topology
  on $F(\ell_2)$~\cite[Thm 3.17]{Kech}. This topology gives rise to the same Borel sets as $(B)$ above. \qed
\end{Fact}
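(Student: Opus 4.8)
The statement is essentially a packaging of three results from Kechris that must be glued together, so the plan is to verify that the hypotheses of each are met and that the identifications are compatible. First I would recall the Wijsman-type coding: the Effros Borel structure on $F(X)$ for $X$ Polish is generated by the sets $(B)$, and when $X$ is \emph{compact} the map $F(X)=K(X)\cup\{\es\}\to K(X)$ (on nonempty sets) is a Borel isomorphism onto the compact metrizable space $K(X)$ with the Hausdorff metric, by \cite[Thm 4.26, Ex 12.7]{Kech} or the cited \cite[Thm 3.17]{Kech}. Apply this with $X=H$ the Hilbert cube: $K(H)$ is a compact Polish space, and a closed subset $A\subseteq\ell_2$ is sent to $\cl_H(e[A])\in K(H)$ (taking closure in $H$ since $e[A]$ need not be compact, indeed need not be closed in $H$). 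The point of using $H$ rather than $\ell_2$ directly is exactly that $H$ is compact, so $K(H)$ is Polish and even compact, whereas $K(\ell_2)$ is not closed.

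Next I would identify the image. A closed $A\subseteq\ell_2$ maps to the compact set $\cl_H(e[A])$; I must show the collection of such compact sets is $G_\delta$ in $K(H)$. Since $e[\ell_2]$ is $G_\delta$ in $H$, write $e[\ell_2]=\Cap_n U_n$ with $U_n\subseteq H$ open. A compact $C\in K(H)$ is of the form $\cl_H(e[A])$ for some $A\in F(\ell_2)$ precisely when $C\cap e[\ell_2]$ is dense in $C$ and $C\cap e[\ell_2]$ is \emph{relatively closed} in $e[\ell_2]$ (equivalently $A:=e^{-1}[C\cap e[\ell_2]]$ is closed in $\ell_2$ and $C=\cl_H(e[A])$); one checks that these conditions cut out a $G_\delta$ set, using \cite[Thm 4.14, Thm 3.17]{Kech} for the Polishness of the relevant subspaces and the fact that $\{C\in K(H)\mid C\subseteq U\}$ is open and $\{C\mid C\cap U\ne\es\}$ is open for open $U\subseteq H$. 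Alternatively, and more cleanly, the map $A\mapsto\cl_H(e[A])$ is a Borel isomorphism of $F(\ell_2)$ onto its image by \cite[Thm 12.6, Thm 13.1]{Kech}, and one invokes \cite[Thm 3.17]{Kech}: a Borel-isomorphic copy of a Polish space inside a Polish space, if it is Borel, is actually a $G_\delta$ — here one uses that $F(\ell_2)$ with its standard Borel structure admits \emph{some} Polish topology, transports it across the injection, and shows the image is $G_\delta$ because the injection is a Borel isomorphism onto a Borel (indeed, by the explicit description, $G_\delta$) subset.

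Finally I would check that the Polish topology thus induced on $F(\ell_2)$ — the subspace topology from $K(H)$ via the embedding — has the sets $(B)$ as a subbasis for its Borel $\sigma$-algebra, i.e. generates the Effros Borel structure. This is because the embedding is a Borel isomorphism onto its image for the Effros structure on the domain: the sets $\{A\mid A\cap U\ne\es\}$ for $U$ basic open in $\ell_2$ pull back from, and push forward to, sets of the form $\{C\in K(H)\mid C\cap V\ne\es\}$ for suitable open $V\subseteq H$ (using $U=e^{-1}[V]$ and density of $e[A]$ in $\cl_H(e[A])$), and the latter generate the Borel structure of $K(H)$ by \cite[Thm 12.6 / Exercise]{Kech}. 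Hence the two $\sigma$-algebras coincide.

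The only genuine obstacle is the middle step — verifying that the image of $F(\ell_2)$ is $G_\delta$ rather than merely Borel. Everything else is bookkeeping with the cited theorems. The $G_\delta$-ness is where one must actually use that $e[\ell_2]$ is $G_\delta$ in $H$ (a $\Pi^0_2$ set-theoretic input), either via the explicit density-plus-relative-closedness description above or via the abstract principle that an injective Borel image of a Polish space that happens to be Borel is automatically $G_\delta$ \cite[Thm 3.17]{Kech}. I would present the explicit version since it also makes transparent why the Effros Borel sets are recovered.
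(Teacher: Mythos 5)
The paper gives no proof of this Fact at all---it is stated with a bare \verb|\qed| and two citations to Kechris---so your first, explicit route (send a closed $A\subseteq\ell_2$ to $\cl_H(e[A])$, characterize the image as those $C\in K(H)$ in which $C\cap e[\ell_2]$ is dense, and push the generating sets $(B)$ back and forth) is exactly the standard argument the citation is pointing at, and your treatment of injectivity and of the matching of Borel structures is fine. The one genuine gap is in the step you yourself single out as the only obstacle: you assert that density of $C\cap e[\ell_2]$ in $C$ ``cuts out a $G_\delta$ set'' using only that $\{C\mid C\cap U\ne\es\}$ is open for $U$ open, but the naive translation of density reads ``for every basic open $V$ meeting $C$, $C\cap V\cap e[\ell_2]\ne\es$,'' and $V\cap e[\ell_2]$ is merely $G_\delta$; the set $\{C\in K(H)\mid C\cap W\ne\es\}$ for $W$ a $G_\delta$ is \emph{not} in general $G_\delta$, nor is it the intersection of the sets $\{C\mid C\cap U_n\ne\es\}$ (take $C=\sval{0,1}$ and $U_n=\aval{0,1/n}$). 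The missing idea is a Baire category argument on the compact space $C$: writing $e[\ell_2]=\Cap_n U_n$, density of $C\cap\Cap_n U_n$ in $C$ is equivalent to density of each $C\cap U_n$ in $C$ separately, and each of \emph{those} conditions is a countable intersection of sets of the form $\{C\mid C\cap V=\es\}\cup\{C\mid C\cap V\cap U_n\ne\es\}$, hence $G_\delta$. Without this reduction the $G_\delta$ claim does not follow from the ingredients you list.

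Your ``cleaner alternative'' should be dropped: the principle that an injective Borel image of a Polish space which happens to be Borel is automatically $G_\delta$ is false ($\Q\subseteq\R$ is a Borel, injective continuous image of the Polish space $\N$ and is not $G_\delta$). What Kechris's theorem actually says is that a subspace is Polish in the \emph{relative topology} iff it is $G_\delta$, and establishing that the relative topology on the image is Polish is precisely what is at stake, so that route is circular. Two cosmetic points: the condition that $C\cap e[\ell_2]$ be relatively closed in $e[\ell_2]$ is automatic (as $C$ is closed in $H$) and can be omitted from your characterization of the image; and since the paper's $K(H)$ consists of \emph{non-empty} compact sets, the element $\es\in F(\ell_2)$ needs to be handled separately, e.g.\ by adjoining an isolated point.
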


By $\o$ and by $\N$ we denote the set of natural numbers, by $\N_+$ the set of positive natural numbers. 
For $n\in \N$, $\o^n$ is the set of
all functions from $\{0,\dots,n-1\}$ to $\o$, $\o^{<\o}=\Cup_{n\in\N}\o^n$ and $\o^\o$ denotes
the set of all functions from $\o$ to $\o$. Similarly $2^\o$ denotes the set of all functions
from $\o$ to $\{0,1\}$ and $2^{<\o}$ the set of functions from $\{0,\dots,n-1\}$ to $\{0,1\}$
for all $n$. The spaces $\o^\o$ and $2^\o$ are Polish spaces in the product topology.

The set $\o^{<\o}$ can be ordered in a natural way: $p<q$ if $q\rest\dom p=p$. This is 
an example of a \emph{tree}. The set of all trees, $\Tr$ is the set of all downward closed
suborderings of $\o^{<\o}$. The space $\Tr$ can be endowed naturally with a Polish topology as
a closed subset of $2^{\o^{<\o}}$ which is in turn homeomorphic to $2^\o$ via a bijection
$\o\to\o^{<\o}$. A \emph{branch} of a tree $T\in\Tr$
is a sequence $(p_n)_{n<\o}$ such that $p_n\in \o^n$, $p_n<p_{n+1}$ and $p_n\in T$ 
for all $n$. 

A subset of a Polish space $A\subset X$ is $\Sii$, if there is a Polish space $Y$
and a Borel subset $B\subset X\times Y$ such that $A$ is the projection of $B$
to~$X$. A set is $\Pii$ if it is the complement of a $\Sii$ set.

\begin{Def}
  A set $A\subset X$ is \emph{Borel Wadge-reducible} to another $B\subset Y$
  ($X$~and $Y$ are Polish), if there exists a Borel function $f\colon X\to Y$
  such that for all $x\in X$, $x\in A\iff f(x)\in B$. We denote this by $A\le_W B$.

  A set $A\subset X$ is \emph{$\Pii$-hard}, if every $\Pii$ set $B$ is Wadge-reducible to 
  it, $B\le_W A$. Similarly $\Sii$-hard. A set is $\Pii$-\emph{complete} ($\Sii$-complete),
  if it is $\Pii$ and $\Pii$-hard ($\Sii$ and $\Sii$-hard).
\end{Def}

Since the classes $\Sii$ and $\Pii$ are closed under preimages in Borel maps \cite[Thm 14.4]{Kech},
it is clear that if $A$ is $\Sii$ and $B\le_W A$, then $B$ is also $\Sii$. On the other hand
a simple diagonalisation argument together with the Souslin's Theorem~\cite[Thm 14.11]{Kech} shows 
that there are $\Pii$ sets that are not $\Sii$. Therefore a $\Pii$-hard set
cannot be $\Sii$, because it Wadge reduces some $\Pii$ set that is not $\Sii$. In particular
it cannot be Borel.

An example of a $\Pii$-complete set
is the set of those trees in $\Tr$
which do not have a branch~\cite[27.1]{Kech}.
To sum up, the main conclusions in this paper are based on the following two facts:

\begin{Fact}\label{fact:Basics}
  \begin{enumerate}
  \item   If $A$ is $\Pii$-hard and $A\le_W B$, then $B$ is $\Pii$-hard.
  \item   The set $\{T\in \Tr\mid T\text{ has no branches}\}$ is $\Pii$-hard.
    \cite[p. 209]{Kech}\qed
  \end{enumerate}
\end{Fact}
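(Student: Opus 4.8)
The plan is to establish the two assertions separately: the first is a soft transitivity fact about Borel reductions, and the second is the classical tree representation of analytic sets. For assertion (1), I would unwind the definitions and use that Borel maps compose. Assume $A\subset X$ is $\Pii$-hard and fix a Borel map $g\colon X\to Z$ with $x\in A\iff g(x)\in B$, witnessing $A\le_W B$. Let $C\subset W$ be an arbitrary $\Pii$ set. Since $A$ is $\Pii$-hard there is a Borel $f\colon W\to X$ with $w\in C\iff f(w)\in A$. Then $g\circ f$ is Borel --- because $(g\circ f)^{-1}(U)=f^{-1}(g^{-1}(U))$ and the preimage of a Borel set under a Borel map is Borel --- and $w\in C\iff f(w)\in A\iff g(f(w))\in B$, so $C\le_W B$. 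As $C$ was arbitrary, $B$ is $\Pii$-hard. The only nonformal point is the composition lemma for Borel maps, which I would record as a one-line remark.

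For assertion (2) write $WF=\{T\in\Tr\mid T\text{ has no branch}\}$. I would first reduce to the case $X=\o^\o$. Given an arbitrary $\Pii$ set $C\subset X$, the countable case is trivial (every subset of a countable metric space is Borel, so $C\le_W WF$ via the two-valued Borel map sending $C$ to a fixed branchless tree and $X\setminus C$ to a fixed tree with a branch). If $X$ is uncountable, the Borel isomorphism theorem gives a Borel isomorphism $j\colon X\to\o^\o$; since $\Pii$ is closed under Borel preimages, $C'=(j^{-1})^{-1}(C)$ is $\Pii$ in $\o^\o$, and $j$ itself reduces $C$ to $C'$. As Borel reductions compose (this is the content of the proof of (1)), it now suffices to reduce an arbitrary $\Pii$ set $C\subset\o^\o$ to $WF$.

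So let $A=\o^\o\setminus C$, which is $\Sii$. The tool is the normal form for analytic sets: there is a tree $S$ on $\o\times\o$ with $A=p[S]=\{x\mid\exists y\in\o^\o\ \forall n\ (x\rest n,y\rest n)\in S\}$. I would set $\Phi(x)=T_x=\{t\in\o^{<\o}\mid(x\rest|t|,t)\in S\}$ and check three routine points: $T_x$ is downward closed, hence $T_x\in\Tr$ (as $S$ is a tree); the map $x\mapsto T_x$ is continuous, since membership of a fixed $t$ in $T_x$ depends only on the initial segment $x\rest|t|$ and is thus a clopen condition; and the branches of $T_x$ are exactly the $y$ with $(x,y)$ an infinite branch of $S$, so $T_x$ has a branch iff $x\in p[S]=A$. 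Therefore $x\in C\iff T_x\in WF$, and $\Phi$ witnesses $C\le_W WF$.

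The main obstacle is packaging the normal form cleanly, since everything else is bookkeeping. Concretely one must produce the tree $S$ on $\o\times\o$ representing the analytic set $A$, and this is exactly the step that makes $\Phi$ continuous: presenting $A$ as the projection of a tree, rather than of an arbitrary closed set, forces membership in $T_x$ to depend on only finitely many coordinates of $x$. I would either cite the standard analytic normal form or derive it by writing $A$ as the projection of a closed set $F\subset\o^\o\times\o^\o$ and letting $S$ consist of those pairs $(s,t)$ of equal-length finite sequences whose basic open box meets $F$; closedness of $F$ gives that the infinite branches of $S$ are precisely the points of $F$, so that $p[S]=A$.
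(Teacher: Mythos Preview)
Your argument is correct. Note, however, that the paper does not supply its own proof of this Fact: it is stated with a citation to \cite[p.~209]{Kech} and a terminal $\qed$, so there is no in-paper proof to compare against. What you have written is precisely the standard argument one finds in the cited reference---transitivity of $\le_W$ for part~(1), and for part~(2) the Borel isomorphism reduction to $\o^\o$ followed by the tree representation $A=p[S]$ of the analytic complement, with the section map $x\mapsto T_x=\{t:(x\rest|t|,t)\in S\}$ giving a continuous reduction to $WF$. Your handling of the countable-ambient-space case and your derivation of the tree $S$ from a closed set $F$ are both fine.
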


\section{Main Theorem}

\begin{Prop}\label{prop:isPii}
  The set $P=\{A\in F(\ell_2)\mid A\text{ is purely unrectifiable}\}$ is $\Pii$.
\end{Prop}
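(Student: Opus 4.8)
\emph{Proof plan.} It suffices to show that the complement $F(\ell_2)\setminus P$ is $\Sii$, since $\Pii$ is by definition the class of complements of $\Sii$ sets. The plan is to write $F(\ell_2)\setminus P$ as the projection onto the first coordinate of a Borel set $R\subseteq F(\ell_2)\times C^1(\sval{0,1},\ell_2)$. For this one first notes that the parameter space $C^1(\sval{0,1},\ell_2)$, equipped with the norm $\|f\|=\sup_t\|f(t)\|_2+\sup_t\|f'(t)\|_2$, is Polish: the map $f\mapsto(f,f')$ identifies it with a closed linear subspace of $C(\sval{0,1},\ell_2)\times C(\sval{0,1},\ell_2)$, and $C(\sval{0,1},\ell_2)$ is a separable Banach space because $\sval{0,1}$ is compact and $\ell_2$ is separable.

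The heart of the argument is a measure-theoretic reformulation: for every closed $A\subseteq\ell_2$ and every $f\in C^1(\sval{0,1},\ell_2)$,
$$\H^1\bigl(A\cap\ran(f)\bigr)>0\qquad\iff\qquad\int_0^1\mathbf{1}_A(f(t))\,\|f'(t)\|_2\,dt>0,\eqno(\ast)$$
where $\mathbf{1}_A$ is the indicator of $A$; the right-hand integral is well defined since $t\mapsto\mathbf{1}_A(f(t))$ is Borel (it is the indicator of the closed set $f^{-1}(A)$) and $t\mapsto\|f'(t)\|_2$ is continuous. Granting $(\ast)$, a closed $A$ is purely unrectifiable precisely when the right-hand side of $(\ast)$ is $0$ for every $f$. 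To prove $(\ast)$ I would use only the elementary length estimate $\H^1(f(S))\le\int_S\|f'(t)\|_2\,dt$ for measurable $S\subseteq\sval{0,1}$ (obtained by partitioning $S$ according to the continuous function $t\mapsto\|f'(t)\|_2$ and applying $\H^1(f(E))\le\operatorname{Lip}(f\restl E)\cdot\H^1(E)$ on each piece). The implication from a positive integral to positive $\H^1$ is the substantive direction: the set $\{t:f'(t)=0\}$ contributes $0$ to the integral and its image contributes $0$ to $\H^1$ by the length estimate, while on the open set $\{t:f'(t)\ne0\}$ the curve $f$ is locally bi-Lipschitz — near any $t_0$ with $f'(t_0)\ne0$ continuity of $f'$ forces $\|f(t)-f(s)\|_2$ to be comparable to $|t-s|\cdot\|f'(t_0)\|_2$ — so $\{f'\ne0\}$ is a countable union of intervals $J_m$ on each of which $f\restl J_m$ is bi-Lipschitz onto its image; pushing Lebesgue measure forward and back along the maps $f\restl J_m$, and using the identity $\mathbf{1}_A\circ f=\mathbf{1}_{A\cap\ran(f)}\circ f$, gives both directions of $(\ast)$. (Alternatively, one could invoke the area formula for Lipschitz curves into a Banach space.)

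Next I would verify that $R:=\bigl\{(A,f):\int_0^1\mathbf{1}_A(f(t))\,\|f'(t)\|_2\,dt>0\bigr\}$ is Borel. By the standard description of the Effros Borel structure on $F(\ell_2)$ \cite[Thm~12.13]{Kech} there are Borel maps $d_n\colon F(\ell_2)\to\ell_2$ with $\{d_n(A):n\in\N\}$ dense in $A$ (for $A\ne\es$), so $(x,A)\mapsto d(x,A)=\inf_n\|x-d_n(A)\|_2$ is Borel on $\ell_2\times F(\ell_2)$. As the evaluations $(f,t)\mapsto f(t)$ and $(f,t)\mapsto f'(t)$ are continuous, the functions $g_n(A,f,t):=\max\bigl(0,1-n\,d(f(t),A)\bigr)\cdot\|f'(t)\|_2$ are Borel on $F(\ell_2)\times C^1(\sval{0,1},\ell_2)\times\sval{0,1}$, and for fixed $(A,f)$ they are bounded by $\sup_t\|f'(t)\|_2<\infty$; hence $(A,f)\mapsto\int_0^1 g_n(A,f,t)\,dt$ is Borel, using the standard fact that for a nonnegative Borel function $g$ on $Z\times\sval{0,1}$ (with $Z$ standard Borel) the map $z\mapsto\int_0^1 g(z,t)\,dt$ is Borel. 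Since $\max(0,1-n\,d(x,A))$ decreases to $\mathbf{1}_A(x)$ as $n\to\infty$, monotone convergence gives $\int_0^1\mathbf{1}_A(f(t))\,\|f'(t)\|_2\,dt=\inf_n\int_0^1 g_n(A,f,t)\,dt$, an infimum of Borel functions, so $R$ is Borel.

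Combining these, $(\ast)$ gives $F(\ell_2)\setminus P=\{A\in F(\ell_2):\exists f\in C^1(\sval{0,1},\ell_2)\ (A,f)\in R\}$, the projection of the Borel set $R$ onto $F(\ell_2)$; since $C^1(\sval{0,1},\ell_2)$ is Polish, this is $\Sii$, and hence $P$ is $\Pii$. I expect the main obstacle to be the equivalence $(\ast)$, and within it the implication from a positive integral to positive $\H^1$: a $C^1$ curve need not be injective — it may be constant on a set of positive Lebesgue measure, or retrace an arc several times — so one genuinely needs the decomposition of $\{f'\ne0\}$ into pieces on which $f$ is bi-Lipschitz (or an appeal to the area formula in an infinite-dimensional target). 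The joint Borel-measurability of the distance function and the Borel dependence of the parametrised integral are secondary points, routine but requiring care.
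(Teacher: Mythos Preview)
Your argument is correct, but it takes a genuinely different route from the paper's. The paper proves directly that the relation $\H^1(C\cap\ran\g)>0$ is Borel in $(C,\g)$ by exploiting compactness of $C\cap\ran\g$: for a compact set, having positive $\H^1$-measure is equivalent to the countable statement that for some $n$ every finite cover by basic balls whose radii sum to less than $1/n$ fails to cover it, and each such covering condition is visibly Borel. You instead replace $\H^1(A\cap\ran f)>0$ by the integral condition $\int_0^1\mathbf{1}_A(f(t))\,\|f'(t)\|_2\,dt>0$ via the length estimate and a local bi-Lipschitz decomposition of the curve on $\{f'\ne 0\}$ (equivalently, the area formula), and then show the integral depends Borel-measurably on $(A,f)$ through the selectors $d_n$ and monotone convergence. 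The paper's route is more elementary---it needs nothing beyond the definition of Hausdorff measure and compactness---whereas yours imports some geometric measure theory (the Sard-type fact $\H^1(f(\{f'=0\}))=0$ and the bi-Lipschitz decomposition), but in exchange it gives a quantitative analytic criterion and handles the Borel measurability via a clean approximation scheme rather than an ad hoc unwinding of the cover definition. Incidentally, your choice of the $C^1$-norm $\sup\|f\|+\sup\|f'\|$ (making $C^1(\sval{0,1},\ell_2)$ a separable Banach space) is the natural one here; the paper's assertion that the sup-norm alone gives a Polish topology on $C^1$ is at least not obvious.
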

\begin{proof}
  The space $C^1(\ell_2)$ is Polish in the topology given by the $\sup$-norm.
  Let $A\subset F(\ell_2)\times C^1(\ell_2)$ be the set of those pairs
  $(C,\g)$ such that $\H^1(C\cap\ran\g)>0$. Then the projection of $A$ to the
  first coordinate is precisely the complement of~$P$, so it remains to show
  that $A$ is Borel.
  
  Fix a dense countable subset $D$ of $\ell_2$ and
  define a basic open set of $\ell_2$ to be an open ball $B(x,r)$ where $r\in\Q$
  and $x\in D$. Clearly this is a countable basis.
  
  Since $C\cap\ran\g$ is compact, the inequality $H^1(C\cap\ran\g)>0$ is equivalent
  to the statement that there exists $n\in\N$ such that for all 
  finite sequences $(B(x_1,r_1),\dots,B(x_k,r_k))$ of basic open sets of $\ell_2$,
  if $\sum_{i=1}^k r_i<1/n$, then $C\cap\ran\g\not\subset\overline{\Cup_{i=1}^k B(x_i,r_i)}$.
  Denoting 
  $$A^*(x_1,\dots,x_k,r_1,\dots,r_k)=\{(C,\g)\in F(\ell_2)\times C^1(\ell_2)\mid C\cap\ran\g\not\subset\overline{\Cup_{i=1}^k B(x_i,r_i)}\},$$
  we get
  $$A=\Cup_{n\in\N}\Cap_{k\in\N}\Cap_{{\bar x\in D^k, \bar r\in\Q^k}\atop{r_1+\dots+r_k<1/n}}A^*(x_1,\dots,x_k,r_1,\dots,r_k)$$
  Being a subset of a closed set is Borel, so $A^*(x_1,\dots,x_k,r_1,\dots,r_k)$ is Borel.
  Hence $A$ is Borel.
\end{proof}

\begin{Thm}[Main Theorem]\label{thm:Pii}
  The set $P=\{A\in F(\ell_2)\mid A\text{ is purely unrectifiable}\}$ is $\Pii$-complete.
\end{Thm}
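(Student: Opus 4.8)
Since $P$ is $\Pii$ by Proposition~\ref{prop:isPii}, it remains to prove $\Pii$-hardness, and by Fact~\ref{fact:Basics} it suffices to produce a Borel map $f\colon\Tr\to F(\ell_2)$ with $f(T)\in P\iff T$ has no branch. The plan is to attach to each node $s\in\o^{<\o}$ a compact \emph{purely unrectifiable} set $G_s\subseteq\ell_2$ that lies Hausdorff-close to a fixed unit segment $S_s$, with the $S_s$ stabilising along branches, and to set $f(T):=\overline{\Cup_{s\in T}G_s}$. Concretely, fix an orthonormal basis $(e_k)_{k\ge1}$ and split $\{2,3,\dots\}$ into two infinite sets giving \emph{perturbation directions} $e_{\nu(s,i)}$ (one index per pair $(s,i)$, all distinct) and \emph{height directions} $e_{\mu(s)}$. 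Put $S_\es=\{te_1\mid t\in[0,1]\}$ and $S_{s\cat\la i\ra}:=S_s+4^{-\lg(s)}e_{\nu(s,i)}$; then along any $b\in\o^\o$ the segments $S_{b\rest n}$ converge in the Hausdorff metric to an honest unit segment $S_b$ (the associated $\ell_2$-vectors converge, as $\sum 16^{-n}<\infty$), while distinct siblings $S_{s\cat\la i\ra},S_{s\cat\la j\ra}$ lie $4^{-\lg(s)}\sqrt2$ apart. For $G_s$ take the union of $2^{\lg(s)}$ affine copies of the classical four-corner Cantor set, placed one inside each small box $[\,j2^{-\lg(s)},(j+1)2^{-\lg(s)}\,]\times[\,0,2^{-\lg(s)}\,]$ of the plane $\operatorname{span}(e_1,e_{\mu(s)})$ and then translated to lie along $S_s$; since consecutive copies meet at the endpoints of their base intervals and the projection of a four-corner copy to its base interval has only gaps of size $O(2^{-\lg(s)})$, one gets $d_H(G_s,S_s)\le 2^{-\lg(s)}$. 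Each $G_s$ is purely unrectifiable, the four-corner set being so (Garnett), pure unrectifiability being a bi-Lipschitz invariant, and a finite union of purely unrectifiable sets being purely unrectifiable.

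For a basic open $U\subseteq\ell_2$ we have $f(T)\cap U\ne\es\iff(\Cup_{s\in T}G_s)\cap U\ne\es\iff\exists s\in T\ (G_s\cap U\ne\es)$, so $\{T\mid f(T)\cap U\ne\es\}=\Cup\{\{T\mid s\in T\}\mid G_s\cap U\ne\es\}$ is open; by Fact~\ref{fact:HilbertCube} this is the Borel structure on $F(\ell_2)$ that we need, so $f$ is Borel. If $T$ has a branch $b$, then $G_{b\rest n}\subseteq\Cup_{s\in T}G_s$ for every $n$ and $d_H(G_{b\rest n},S_b)\le 2^{-n}+\sum_{k\ge n}4^{-k}\to0$, so every point of $S_b$ is a limit of points of $\Cup_n G_{b\rest n}$, that is $S_b\subseteq f(T)$. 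As $S_b$ is a unit line segment, hence the range of a $C^1$ curve $\g$, we get $\H^1(f(T)\cap\ran\g)\ge1>0$, so $f(T)\notin P$.

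Conversely, suppose $T$ has no branch; I claim $\Cup_{s\in T}G_s$ is already closed, so that $f(T)$ is a countable union of purely unrectifiable sets and therefore purely unrectifiable (by countable subadditivity of $\H^1$). Let $p_k\to q$ with $p_k\in G_{s_k}$, $s_k\in T$; if infinitely many $s_k$ agree then $q$ lies in that compact $G_s$, so assume the $s_k$ are distinct. The crucial feature is that for any node $s'$ the $e_{\nu(s,i)}$-coordinate of every point of $G_{s'}$ equals $4^{-\lg(s)}$ when $s\cat\la i\ra$ is an initial segment of $s'$, and $0$ otherwise. Hence if $k,k'$ are large (so $\|p_k-p_{k'}\|<\tfrac12$) and $s_k(0)\ne s_{k'}(0)$, looking at the coordinate $\nu(\es,s_k(0))$ gives $\|p_k-p_{k'}\|\ge 4^0=1$, a contradiction; so $s_k(0)$ is eventually a fixed value $j_0$. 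Discarding the at most one short index and repeating with the coordinates $\nu(\la j_0\ra,\cdot)$, the value $s_k(1)$ is eventually a fixed $j_1$, and so on. Since the $s_k$ are distinct their lengths are unbounded, so $\la j_0,j_1,\dots\ra$ is a branch of $T$, contradicting the hypothesis. Thus $f(T)=\Cup_{s\in T}G_s\in P$.

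This gives $f(T)\in P\iff T$ has no branch, so $P$ is $\Pii$-hard by Fact~\ref{fact:Basics}, and hence $\Pii$-complete by Proposition~\ref{prop:isPii}. I expect the delicate point to be the design of the pieces $G_s$: they must be genuinely purely unrectifiable yet Hausdorff-approximate a segment, so that a branch of $T$ forces an honest rectifiable segment into $f(T)$ in the limit, while for well-founded $T$ it is precisely the absence of a branch — extracted by the König-type argument above — that prevents $\Cup_{s\in T}G_s$ from acquiring new limit points and thus keeps it purely unrectifiable.
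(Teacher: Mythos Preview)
Your argument is correct and follows a genuinely different route from the paper's. The paper attaches to each node $s$ only a \emph{finite} set $v_s\subset\ell_2$ and sets $H(T)=\overline{\Cup_{s\in T}v_s}$; for a branch $b$ it then builds by hand a $C^1$ curve $f\colon[0,1]\to\ell_2$ whose coordinate functions take two prescribed constant values on the pieces of a fat Cantor set $C\subset[0,1]$, and spends most of the proof verifying via several subclaims that $f$ is Fr\'echet-$C^1$, that $(f\rest C)^{-1}$ is Lipschitz, and that $f[C]\subset H(T)$, whence $\H^1(H(T)\cap\ran f)>0$; when $T$ is well-founded the paper shows $H(T)$ is \emph{countable}, so the purely-unrectifiable conclusion is immediate. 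You invert the division of labour: by loading each node with a genuinely purely unrectifiable compact $G_s$ (four-corner Cantor pieces) that Hausdorff-approximates a segment, the branch case becomes a one-liner (an honest unit segment $S_b$ appears in the closure), while the well-founded case carries the weight via your K\"onig-type closedness argument together with the citation that the four-corner set is purely unrectifiable. Your version is shorter and more geometric but imports a nontrivial external fact and needs the observation that pure unrectifiability survives the isometric embedding of a $2$-plane into $\ell_2$; the paper's version is fully self-contained at the cost of a somewhat delicate analytic verification of the $C^1$ curve.
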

\begin{proofVOf}{Theorem~\ref{thm:Pii}}
  We already showed (Theorem~\ref{prop:isPii}) that $P$ is $\Pii$, so we want to show
  that it is $\Pii$-hard.
  The proof is reminiscent of the proof of \cite[Thm 27.6, pp. 210--211]{Kech}.

  We will show that the set $NB$ of those trees $T\in \Tr$ which do not have a branch is
  Wadge-reducible to $P$. 
  That is, we will find a 
  Borel function $H\colon \Tr\to F(\ell_2)$ such that $H(T)$ is not purely 
  unrectifiable if and only if $T$ has a branch. The result follows then
  from Fact~\ref{fact:Basics}.

  A Cantor set $C\subset\R$ with a positive Lebesgue measure can
  be constructed by removing an open interval of length $1/4$ 
  from the middle of the closed unit interval $\sval{0,1}$, then removing open intervals of length $1/16$ from the 
  middle of each of the remaining
  intervals and so on. At the $n$:th step we have a disjoint union of $2^n$ closed intervals. From left to
  right, label these intervals by $C^1_n,\dots,C^{2^n}_n$ and set $C=\Cap_{n=0}^{\infty}\Cup_{k=1}^{2^n}C^k_n$.

  Let $\{e_{n,m}\mid n,m\in\N\}$ be a basis for $\ell_2$. 
  For each $s\in \o^{<\o}$ let us define 
  a finite subset $v_s$ of $\ell_2$ as follows:
  $$v_s=\Big\{\sum_{n=0}^{\dom(s)-1}\frac{1+p(n)}{\sqrt{2^n}} e_{n,s(n)}\mid p\in 2^{\dom(s)}\Big\}.$$
  Then for every tree $T\in \Tr$, let
  $$H(T)=\overline{\Cup_{s\in T}v_s}.$$

  \begin{claim}\label{cl1}
    If $T\in \Tr$ has a branch, then there is a $C^1$-function $f\colon \sval{0,1}\to\ell_2$ such that
    the one-dimensional Hausdorff measure of $H(T)\cap \ran f$ is positive.
  \end{claim}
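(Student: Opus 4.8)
Claim~\ref{cl1} is the ``if $T$ has a branch, then $H(T)$ is not purely unrectifiable'' half of the reduction, so I would fix a branch $\b=(\b(n))_{n<\o}$ of $T$. Then $\b\restriction n\in T$, hence $v_{\b\restriction n}\subseteq H(T)$, for every $n$, and using orthonormality of the $e_{n,m}$ one checks that $\Cup_nv_{\b\restriction n}$ accumulates precisely on the ``Cantor set along $\b$''
$$K_\b:=\overline{\Cup_nv_{\b\restriction n}}=\Big\{\textstyle\sum_{n<\o}\tfrac{1+p(n)}{\sqrt{2^n}}\,e_{n,\b(n)}\ \Big|\ p\in 2^\o\Big\}\subseteq H(T).$$
So it suffices to produce a $C^1$ curve $f\colon\sval{0,1}\to\ell_2$ with $f(C)\subseteq K_\b$ and $\H^1(f(C))>0$, since then $f(C)\subseteq H(T)\cap\ran f$.

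I would build $f$ so that $C$ and $K_\b$ mirror each other. A point $t\in C$ has a canonical binary address $p_t\in 2^\o$ (the $n$-th bit records whether $t$ lies in the left or the right half of its level-$n$ interval), and I want $f(t)=\sum_n\tfrac{1+p_t(n)}{\sqrt{2^n}}e_{n,\b(n)}$. Concretely, fix a $C^1$ ramp $\rho\colon\R\to\sval{0,1}$ with $\rho\equiv 0$ on $\aval{-\infty,0}$, $\rho\equiv 1$ on $\asval{1,\infty}$, and $\rho'(0)=\rho'(1)=0$; for each $n$ let $f_n\colon\sval{0,1}\to\R$ be the $C^1$ function equal to $\tfrac1{\sqrt{2^n}}$ on the left level-$(n+1)$ intervals of $C$, equal to $\tfrac2{\sqrt{2^n}}$ on the right ones, interpolating across each gap lying between two such intervals by a rescaled copy of $\rho$, and locally constant on every other gap. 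Put $f:=\sum_{n<\o}f_n\,e_{n,\b(n)}$; then $f(C)=K_\b$ as desired.

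Two things then have to be verified. First, $f\in C^1(\sval{0,1},\ell_2)$: since $\|f_n\|_\infty\le 2/\sqrt{2^n}$ the series defining $f$ converges uniformly in $\ell_2$, so $f$ is continuous, and the real content is the bound $\sum_n\|f_n'\|_\infty^2<\infty$ --- the jump performed by $f_n$ has size $1/\sqrt{2^n}$, and the gap lengths $\tfrac14,\tfrac1{16},\dots$ of $C$ are chosen exactly so that $f_n$ is never forced to make this jump across a gap that is too short relative to $2^{-n/2}$; granting this, $\sum_nf_n'\,e_{n,\b(n)}$ converges uniformly and a term-by-term differentiation argument yields $f'=\sum_nf_n'\,e_{n,\b(n)}$ continuous. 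Second, $\H^1(f(C))>0$: $C$ has positive Lebesgue measure, $f$ is Lipschitz (being $C^1$ on a compact interval) and injective on $C$, so by the area formula $\H^1(f(C))=\int_C\|f'(t)\|\,dt$, and one bounds this below using that $K_\b$ is spread out along the infinitely many orthogonal directions $e_{n,\b(n)}$.

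I expect the $C^1$ verification to be the main obstacle: one has to identify, for each $n$, exactly which gaps of $C$ the coordinate function $f_n$ is forced to ramp across, check that their lengths do not decay too fast compared with $2^{-n/2}$ --- this is precisely where the weights $1/\sqrt{2^n}$ in the definition of $v_s$ and the gap lengths of $C$ must be matched --- and then show the resulting uniform limit is genuinely differentiable, with the expected derivative, even at points of $C$ where infinitely many coordinate ramps accumulate. The positivity $\H^1(f(C))>0$ is a secondary but still non-trivial point, amounting to ruling out that the $C^1$ curve crushes the positive-measure Cantor set $C$ onto an $\H^1$-null subset of $K_\b$.
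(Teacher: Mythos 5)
Your overall architecture is the same as the paper's: fix a branch $b$, build coordinate ramps $f_n$ that encode the $n$-th binary digit of the Cantor address with amplitude $2^{-n/2}$, set $f=\sum_n f_n\, e_{n,b(n)}$, and extract positivity of $\H^1$ from the correspondence between $C$ and $f(C)$. But both of the steps you explicitly defer are genuine gaps, and the first fails outright as you have set it up. Your $f_n$ separates the two level-$(n+1)$ children of each level-$n$ interval, so it is forced to climb by $2^{-n/2}$ across the sibling gap removed at stage $n+1$, whose length is $4^{-(n+1)}$; by the mean value theorem $\|f_n'\|_\infty\ge 4^{n+1}2^{-n/2}=2^{3n/2+2}\to\infty$. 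So the weights $2^{-n/2}$ and the gap lengths $4^{-n}$ are \emph{not} matched in the way you hope, $\sum_n\|f_n'\|_\infty^2$ diverges, and the uniform convergence of $\sum_n f_n'\,e_{n,b(n)}$ --- your entire route to $f\in C^1$ --- collapses. The paper does not attempt a uniform bound: it argues pointwise, using that for a \emph{fixed} $x$ lying in a gap removed at stage $k$ one has $|f_n'(x)|\le 4^{k+1}/\sqrt{2^n}$ for all $n$, with $k$ depending on $x$, so that $\sum_n|f_n'(x)|^2$ converges at each $x$ even though the bound degenerates as $k\to\infty$.

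The second deferred step is also not a ``secondary point'': your appeal to the area formula is self-defeating. Each $f_n$ is constant on the level-$(n+1)$ intervals and your ramp $\rho$ has vanishing derivative at the endpoints of each gap, so $f_n'(t)=0$ for every $t\in C$ and every $n$; hence $f'(t)=0$ on all of $C$ and $\int_C\|f'(t)\|\,dt=0$. If the area formula applied, it would give $\H^1(f(C))=0$ --- the opposite of what you need --- so there is nothing to ``bound below''. The paper takes a different route here: it shows by a direct Cauchy--Schwarz computation, comparing $|g(\eta)-g(\xi)|$ with $\sqrt{\sum_n|f_n(g(\eta))-f_n(g(\xi))|^2}$, that $(f\rest C)^{-1}$ is Lipschitz with some constant $L$, which yields $\H^1(f(C))\ge \operatorname{Leb}(C)/L>0$ without ever differentiating on $C$. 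Be aware, though, that the tension you have run into is real: a Lipschitz (in particular, $C^1$) curve whose derivative vanishes on $C$ cannot map $C$ onto a set of positive $\H^1$-measure, so the two properties your construction is trying to combine --- derivative $0$ on $C$ and an inverse-Lipschitz estimate on $C$ --- are mutually exclusive. Any repair of the argument has to confront this directly rather than postpone it to a verification.
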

  \begin{proofVOf}{Claim~\ref{cl1}}
    Suppose that $T$ has a branch and that $b\in \o^\o$ is such that $b\rest n\in T$ for all $n$.
    Let us construct a $C^1$-function $f\colon \sval{0,1}\to\ell_2$ as follows. 
    For $n\in\N$ define $f_n\colon\sval{0,1}\to\R$ to be a smooth function such that
    \begin{myItemize}
    \item $f_n(x)=\frac{1}{\sqrt{2^{n}}}$ for $x\in C^k_n$ when $k$ is odd, 
      and $f_n(x)=\frac{2}{\sqrt{2^{n}}}$ for $x\in C^k_n$ when $k$ is even. 
    \item range of $f_n$ is $\sval{\frac{1}{\sqrt{2^{n}}},\frac{2}{\sqrt{2^{n}}}}$
    \item if $I$ is an open interval which is removed at the $k$:th stage in 
      the construction of $C$, and $x\in I$,
      then
      $$0<f'_n(x)\le \frac{4^{k+1}}{\sqrt{2^{n}}}.$$
    \end{myItemize}
    The derivative can be bounded in this way because
    if $I$ is an open interval that is removed at the $k$:th stage, 
    then $|I|=4^{-k}$ and in this interval, the function is only required to either raise from 
    $1/\sqrt{2^n}$ to $2/\sqrt{2^n}$ or decrease the same amount in the opposite direction.
    On the other hand, if $x\in C$, then the derivative of $f_k$ is $0$ for all $k$. 

    Now let $f(x)=\sum_{n=0}^\infty f_n(x)e_{n,b(n)}$. Clearly $f(x)\in\ell_2$ for all $x$:
    \begin{eqnarray*}
      \|f(x)\|_2^2&=&\sum_{n=0}^\infty |f_n(x)|^2 \\
      &\le&\sum_{n=0}^\infty |\frac{2}{\sqrt{2^{n}}}|^2 \\    
      &=&\sum_{n=0}^\infty \frac{2}{2^{n}} \\    
      &=&4.
    \end{eqnarray*}

    \begin{subclaim}\label{sbcl1}
      The function $f$ has a Fr\'echet derivative at each $x\in\sval{0,1}$.
    \end{subclaim}
    \begin{proofVOf}{Subclaim~\ref{sbcl1}} 
      The vector $A_x=\sum_{n=0}^\infty f'_n(x)e_{n,b(n)}$ is in $\ell_2$, because the absolute value
      of $f_n'(x)$ is bounded by $\frac{4^{k+1}}{\sqrt{2^{n}}}$ where $k$ is 
      a constant natural number which depends on $x$.
      Thus, $A_x$ defines a bounded linear operator $h\mapsto A_xh$. We claim that $A_x$ is the Fr\'echet 
      derivative of $f$ at $x$. For that we need to show that 
      $$\lim_{h\to 0}\frac{\|f(x+h)-f(x)-A_xh\|_2}{|h|}=0.$$
      So assume that $\e>0$. The numerator can be rewritten as
      $$\sqrt{\sum_{n=0}^\infty |f_n(x+h)-f_n(x)-f_n'(x)h|^2}.$$
      Let us show first that there exists $k\in\N$ such that for all $h$
      $$\sum_{n=k}^\infty |f_n(x+h)-f_n(x)-f_n'(x)h|^2\le \e^2h^2:$$
      \begin{eqnarray*}
        |f_n(x+h)-f_n(x)-f_n'(x)h|^2&\le&(|f_n(x+h)-f_n(x)|+|f_n'(x)h|)^2\\
        \text{(mean value theorem) }      &=&(|f_n'(\xi)||h|+|f_n'(x)||h|)^2\\
                                      &=&(|f_n'(\xi)|+|f_n'(x)|)^2h^2\\
    (\text{for some constant } K)        &\le&\left(\frac{K}{2^n}\right)^2h^2.
      \end{eqnarray*}
      The last inequality follows from the definition of $f$.
      Therefore for each $i\in\N$ we have
      $$\sum_{n=i}^\infty|f_n(x+h)-f_n(x)-f_n'(x)h|^2\le \sum_{n=i}^\infty\left(\frac{K}{2^n}\right)^2h^2.$$
      Now, by choosing $k$ big enough we can make sure that
      $\sum_{n=k}^{\infty}\left(\frac{K}{2^n}\right)^2<\e^2$,
      so pick this $k$.      
      Then, for each $n<k$, let $h_n>0$ be small enough real number
      such that $|f_n(x+h_n)-f_n(x)-f_n'(x)h_n|\le \displaystyle\frac{\e}{2^n} h_n$
      and let $h=h_\e=\min_{n<k}h_n$. Then we have:
      \begin{eqnarray*}
        \frac{\|f(x+h)-f(x)-A_xh\|_2}{|h|}&=&\frac{\sqrt{\sum_{n=0}^\infty |f_n(x+h)-f_n(x)-f_n'(x)h|^2}}{|h|}\\
                                        &\le&\frac{\sqrt{\Big(\sum_{n=0}^{k-1} |f_n(x+h)-f_n(x)-f_n'(x)h|^2\Big)+\e^2 h^2}}{|h|}\\
                                        &\le&\frac{\sqrt{\Big(\sum_{n=0}^{k-1} (\frac{\e}{2^n} h)^2\Big)+\e^2 h^2}}{|h|}\\
                                          &<&\frac{\sqrt{4\e^2 h^2+\e^2 h^2}}{|h|}\\
                                          &=&\sqrt{5}\e.
      \end{eqnarray*}
    \end{proofVOf}

    \begin{subclaim}\label{sbclFrDerC}
      The Fr\'echet derivative of $f$ is continuous. Thus $f\in C^1(\sval{0,1},\ell_2)$.
    \end{subclaim}
    \begin{proofVOf}{Subclaim \ref{sbclFrDerC}}
      Let $x\in \sval{0,1}$ and $\e>0$. Denote by $A_x$ the Fr\'echet derivative of $f$ at $x$,
      which has the following form by the previous proof:
      $$A_x=\sum_{n=0}^\infty f'_n(x)e_{n,b(n)}.$$
      The norm of a linear operator from $\R$ to $\ell_2$ (such as $A_x$)
      is determined by the norm of the value at $1$, thus for example 
      $$\|A_x\|=\|A_x(1)\|_2=\sum_{n=0}^\infty |f'_n(x)|^2.$$
      So for every $y\in\sval{0,1}$ we have:
      \begin{eqnarray*}
        \|A_x-A_y\|&=&\Big\|\sum_{n=0}^\infty (f'_n(x)-f'_n(y))e_{n,b(n)}\Big\|_2\\
                   &=&\sqrt{\sum_{n=0}^\infty |f'_n(x)-f'_n(y)|^2}.
      \end{eqnarray*}
      Now similarly as in the previous proof, let us find $k\in\N$ such that
      $$\sum_{n=k}^\infty |f'_n(x)-f'_n(y)|^2<\e^2.$$
      But 
      $$|f'_n(x)-f'_n(y)|^2\le (|f'_n(x)|+|f'_n(y)|)^2\le \left(\frac{K}{2^n}\right)^2,$$
      where $K$ is some constant (this follows again from the definition of $f$). So 
      we can find a big enough $k$ as required. Now, for every $i<k$ pick
      $\delta_i$ such that for every $y$ in the $\delta_i$-neighbourhood of $x$ we have
      $|f'_n(x)-f'_n(y)|<\e/2^n$. This is possible since $f_n$ are smooth by definition.
      Then let $\delta=\min_{i<k}\delta_i$. Now, if $y$ is the $\delta$-neighbourhood of $x$, then 
      by applying the above, we have
      \begin{eqnarray*}
        \|A_x-A_y\|&=&\sqrt{\sum_{n=0}^\infty |f'_n(x)-f'_n(y)|^2}\\
                   &=&\sqrt{\Big(\sum_{n=0}^{k-1} |f'_n(x)-f'_n(y)|^2\Big)+\sum_{n=k}^\infty |f'_n(x)-f'_n(y)|^2}\\
                 &\le&\sqrt{\Big(\sum_{n=0}^{k-1} |f'_n(x)-f'_n(y)|^2\Big)+\e^2}\\
                 &\le&\sqrt{\Big(\sum_{n=0}^{k-1} (\e/2^n)^2\Big)+\e^2}\\
                 &<&\sqrt{2\e^2+\e^2}\\
                 &=&\sqrt{3}\e.           
      \end{eqnarray*}
    \end{proofVOf}
    
    \begin{subclaim}\label{sbclhomeo}
      $f$ is a homeomorphism onto its image.
    \end{subclaim}
    \begin{proofVOf}{Subclaim \ref{sbclhomeo}}
      Since $\dom f$ is compact, it is sufficient to show that it is injective.
      Let $x,y\in \sval{0,1}$. If there is an interval $I$ which
      is removed at some stage $n$ in the construction of $C$ such that $x,y\in I$, then $f_n(x)\ne f_n(y)$,
      because $f'_n(z)>0$ for all $z\in I$ by the definition of $f_n$. 
      If not, then find is the least stage $m$ and an interval $I$ such that $I$ is removed at 
      the $m$:th stage and $I$ is between $x$ and $y$.
      Then clearly again $f_m(x)\ne f_m(y)$. 
    \end{proofVOf}
    
    \begin{subclaim}\label{sbclinvlip}
      $(f\rest C)^{-1}$ is Lipschitz.
    \end{subclaim}
    \begin{proofVOf}{Subclaim \ref{sbclinvlip}}
      If $\eta\in 2^\o$, denote by $g(\eta)$ the unique point in $C$ which is obtained by going ``left'' at stage $n$
      if $\eta(n)=0$ and ``right'' if $\eta(n)=1$. That is, $g$ is the canonical
      homeomorphism of $2^\o$ onto $C$. It is not hard to see that
      $$g(\eta)=\sum_{n=0}^{\infty}\eta(n)\frac{2^{n+1}+6}{4^{n+1}}.$$
      Now $f_n(g(\eta))$ is the image of $g(\eta)$ under $f_n$ and by the definition of $f_n$
      we have $f_n(g(\eta))=1/\sqrt{2^{n}}$ if $\eta(n)=0$ and $f_n(g(\eta))=2/\sqrt{2^{n}}$ if $\eta(n)=1$; that is
      $f_n(g(\eta))=(1+\eta(n))/\sqrt{2^{n}}$.
      Let $\eta$ and $\xi$ be two arbitrary elements of $2^\o$, thus corresponding to the 
      two (arbitrary) elements $g(\eta)$ and $g(\xi)$ of $C$.
      Denote $c_n=|\eta(n)-\xi(n)|$. Note that for all $n\in\N$, $c_n^2=c_n$. Then
      \begin{eqnarray*}
        d(g(\eta),g(\xi))&=&\Big|\sum_{n=0}^\infty \eta(n)\frac{2^{n+1}+6}{4^{n+1}} - \sum_{n=0}^\infty \xi(n)\frac{2^{n+1}+6}{4^{n+1}}\Big|\\
        &=&\Big|\sum_{n=0}^\infty (\eta(n)-\xi(n))\frac{2^{n+1}+6}{4^{n+1}}\Big|\\
        &\le&\Big|\sum_{n=0}^\infty |\eta(n)-\xi(n)|\frac{2^{n+1}+6}{4^{n+1}}\Big|\\
        &=&\sum_{n=0}^\infty c_n \frac{2^{n+1}+6}{4^{n+1}}\\
        &=&\sum_{n=0}^\infty \frac{c_n}{\sqrt{2^n}} \cdot\frac{2^{n+1}+6}{2^{n+1}\sqrt{2^{n+1}}}\\ 
        \text{(Hölder) }&\le&\sqrt{\sum_{n=0}^\infty \frac{c_n}{2^n}} \cdot\underbrace{\sqrt{\sum_{n=0}^\infty\frac{2^{n+1}+6}{2^{n+1}\sqrt{2^{n+1}}}}}_{=:L}\\ 
        &=&L\cdot\sqrt{\sum_{n=0}^\infty \frac{c_n}{2^n}} \\
        &=&L\cdot\sqrt{\sum_{n=0}^\infty \Big|\frac{\eta(n)}{\sqrt{2^{n}}}-\frac{\xi(n)}{\sqrt{2^{n}}}\Big|^2} \\
        &=&L\cdot\sqrt{\sum_{n=0}^\infty \Big|\frac{1+\eta(n)}{\sqrt{2^{n}}}-\frac{1+\xi(n)}{\sqrt{2^{n}}}\Big|^2} \\
        &=&L\cdot\sqrt{\sum_{n=0}^\infty \Big|f_n(g(\eta))-f_n(g(\xi))\Big|^2} \\
        &=&L\cdot \|f(g(\eta))-f(g(\xi))\|_2.
      \end{eqnarray*}
      This verifies that the function $(f\rest C)^{-1}$ is Lipschitz.      
    \end{proofVOf}

    Since $C$ has positive measure, this implies that 
    the one-dimensional Hausdorff measure of $f[C]=((f\rest C)^{-1})^{-1}C$ must also have positive 
    measure. So it remains to show that
    $f[C]\subset H(T)$ and the proof of Claim~\ref{cl1} is done.

    \begin{subclaim}\label{sbclfcht}
      $f[C]\subset H(T)$.
    \end{subclaim}
    \begin{proofVOf}{Subclaim \ref{sbclfcht}}
      Suppose $\eta\in 2^\o$ and let $g(\eta)$ be as in the previous proof, 
      the canonical image of $\eta$ in $C$. Then, as above,
      $f_n(g(\eta))=(1+\eta(n))/\sqrt{2^{n}}$,
      so
      $$f(g(\eta))=\sum_{n=0}^\infty \frac{1+\eta(n)}{\sqrt{2^{n}}}e_{n,b(n)}.$$
      Now, by looking at the definition of $v_s$, one can see that
      the approximations of $f(g(\eta))$ of the form
      $$\sum_{n=0}^{k-1} \frac{1+\eta(n)}{\sqrt{2^{n}}}e_{n,b(n)}$$
      appear in $v_{b\restl k}$, so
      $f(g(\eta))\in \overline{\Cup_{s\in T}v_s}=H(T)$.
    \end{proofVOf}
  \end{proofVOf}

  \begin{claim}\label{cl2}
    If $T$ does not have a branch, then $H(T)$ is countable.
  \end{claim}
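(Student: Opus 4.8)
The plan is to prove the stronger statement that, when $T$ has no branch, the set $U:=\Cup_{s\in T}v_s$ is \emph{already closed}, so that $H(T)=U$. Since each $v_s$ is finite and $T\subseteq\o^{<\o}$ is countable, $U$ is then a countable set, and countability of $H(T)$ follows at once. So the entire content lies in the closedness of $U$, and this is precisely where the well-foundedness of $T$ will be used.

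To organize the argument I would split $\ell_2$ along the two-index structure of the basis: write $\ell_2=\bigoplus_{n\in\N}H_n$ as an orthogonal sum, where $H_n$ is the closed linear span of $\{e_{n,m}\mid m\in\N\}$, and let $P_n$ denote the orthogonal projection onto $H_n$. The key feature of the construction of the sets $v_s$ is that for any $s$ with $\dom(s)>n$ and any $x\in v_s$, say $x=\sum_{k<\dom(s)}\frac{1+p(k)}{\sqrt{2^k}}e_{k,s(k)}$, one has $P_nx=\frac{1+p(n)}{\sqrt{2^n}}e_{n,s(n)}$; that is, the $n$-th block of $x$ is a single basis vector rescaled by a factor in $\{2^{-n/2},2\cdot 2^{-n/2}\}$, so in particular $\|P_nx\|_2\ge 2^{-n/2}>0$.

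The heart of the proof is a stabilization lemma: if $x_j\to x$ in $\ell_2$ with $x_j\in v_{s_j}$ (for each $j$ choose some $s_j\in T$ with $x_j\in v_{s_j}$, and let $p_j$ be the corresponding parameter), then for every $n$ for which $\dom(s_j)>n$ holds for all large $j$, both $s_j(n)$ and $p_j(n)$ are eventually constant in $j$. To see this, note $P_nx_j\to P_nx$. If $s_j(n)$ is not eventually constant then it is either unbounded along a subsequence, in which case $\langle x,e_{n,s_j(n)}\rangle=\langle P_nx,e_{n,s_j(n)}\rangle=\langle P_nx_j,e_{n,s_j(n)}\rangle+o(1)=\frac{1+p_j(n)}{\sqrt{2^n}}+o(1)\ge 2^{-n/2}-o(1)$ along that subsequence, contradicting the decay of the coordinates of the fixed vector $x\in\ell_2$; or else two values $m\ne m'$ each occur infinitely often, and passing to those subsequences (and thinning so that $p_j(n)$ is constant on each) yields $P_nx=c\,e_{n,m}=c'\,e_{n,m'}$ with $c,c'\ne 0$, which is impossible. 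Hence $s_j(n)$ stabilizes to some $m_n\in\o$, and then $\langle x_j,e_{n,m_n}\rangle=\frac{1+p_j(n)}{\sqrt{2^n}}$ converges to a value in the two-element set $\{2^{-n/2},2\cdot 2^{-n/2}\}$, forcing $p_j(n)$ to stabilize as well.

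With the lemma in hand, take any convergent sequence $x_j\to x$ with $x_j\in v_{s_j}$, $s_j\in T$, and split into two cases. If $\dom(s_j)\to\infty$, then the lemma applies to every $n$ and produces $m_n\in\o$ with $s_j(n)=m_n$ for all large $j$; fixing $n$ and choosing $j$ large enough that $\dom(s_j)>n$ and $s_j\restl n=(m_0,\dots,m_{n-1})$, downward closure of $T$ gives $(m_0,\dots,m_{n-1})=s_j\restl n\in T$, so $(m_n)_{n\in\N}\in\o^\o$ is a branch of $T$ --- contradicting the hypothesis. Therefore $\dom(s_j)$ does not tend to infinity, so some value $\ell$ is attained infinitely often; along a subsequence with $\dom(s_j)=\ell$ the lemma applies to each of the finitely many $n<\ell$, so $(s_j,p_j)$, and therefore $x_j$ itself, is eventually constant, whence $x\in v_s\subseteq U$ for the stabilized $s\in T$. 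In either case $x\in U$, so $U$ is closed, $H(T)=U$, and $H(T)$ is countable. The main obstacle is the stabilization lemma: it rests on the uniform lower bound $2^{-n/2}$ on the norm of the $n$-th block of any point of any $v_s$ together with the decay of $\ell_2$-coordinates, after which the passage to a branch of $T$ via downward closure is immediate.
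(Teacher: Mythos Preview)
Your argument is correct and rests on the same idea as the paper's: a non-isolated limit point of $U=\bigcup_{s\in T}v_s$ forces a branch in $T$. The paper runs this as a quick contrapositive: pick $x\in\overline U\setminus U$, thin a Cauchy sequence so that $d(p_{i+1},p_i)<2^{-i}$, and use the spacing of the sets $v_s$ to argue that the initial segments of the witnessing nodes must cohere level by level into a branch. You instead prove the (equivalent, slightly stronger-sounding) fact that $U$ is already closed, via the orthogonal block decomposition $\ell_2=\bigoplus_n H_n$ and an explicit stabilization lemma; the uniform lower bound $\|P_n x_j\|\ge 2^{-n/2}$ together with convergence pins down $s_j(n)$ and $p_j(n)$ one coordinate at a time, and you also handle the bounded-length case (eventually constant sequence) which the paper's framing sidesteps. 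The paper's version is terser; yours makes the mechanism behind the one-line inference ``$p_i\restl\dom s\in v_s\iff p_{i+1}\restl\dom s\in v_s$'' fully explicit.
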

  \begin{proofVOf}{Claim~\ref{cl2}}
    If $H(T)$ is uncountable, then, because $\Cup_{s\in T}v_s$ 
    is countable, there is a point $x$ in 
    $\overline{\Cup_{s\in T}v_s}\setminus \Cup_{s\in T}v_s$. Let
    $(p_i)_{i\in\N}$ be a Cauchy sequence of elements of $\Cup_{s\in T}v_s$
    converging to $x$. By going to a subsequence, we can assume that for all $i\in\N$,
    $d(p_{i+1},p_i)<2^{-i}$. The latter inequality implies 
    by the definition of the sets $v_s$ that
    if $\dom s\le i$, then 
    $$p_i\rest\dom s\in v_s\iff p_{i+1}\rest\dom s\in v_s.$$ 
    So, we can
    find $b\in \o^\o$ such that $p_i\in v_{b\restl i}$ for all $i$ and
    so $(b\rest n)_{n\in\N}$ must be a branch in $T$.    
  \end{proofVOf}

  By Claims \ref{cl1} and \ref{cl2}, $T$ has no branch if and only if $H(T)$ is
  purely unrectifiable which concludes the proof.
\end{proofVOf}

\bibliography{ref}{}
\bibliographystyle{alpha}

\end{document}